\documentclass
{article}

\usepackage[T2A]{fontenc}
\usepackage{amsmath}
\usepackage{amsthm}
\usepackage{amsxtra}
\usepackage{amsfonts,amssymb}
\usepackage{latexsym}
\setcounter{secnumdepth}{5}


\newtheorem{Th}{Theorem}
\newtheorem{Prop}{Proposition}

\theoremstyle{definition}
\newtheorem{Def}[Prop]{Definition}

\newtheorem{Rem}{Remark}

 \abovedisplayskip 8pt \abovedisplayskip 8pt
\belowdisplayskip 9pt

\begin{document}

\title{Standardness as an invariant formulation of independence}

\author{A.~M.~Vershik\footnote{This research was conducted at the Institute for Information Transmission Problems and supported by the Russian Science Foundation grant \# 14-50-00150.}}

\date{}

\maketitle

\begin{center}
To the memory of my friend mathematician Victor Havin
\end{center}

\begin{abstract}
The notion of a homogeneous standard filtration of $\sigma$-algebras was introduced by the author in 1970. The main theorem asserted that a homogeneous filtration is standard, i.e., generated by a sequence of independent random variables, if and only if the standardness criterion is satisfied. In this paper we give detailed definitions and characterizations of Markov standard filtrations. The notion of standardness is essential for applications of probabilistic, combinatorial, and algebraic nature. At the end  of the paper we present new notions of {\it shadow metric-measure space} related to nonstandard filtrations.
\end{abstract}

\section{Introduction}

A decreasing filtration (hereafter called just a filtration) is a decreasing sequence
$${\frak A}_0 \supset {\frak A}_1 \supset {\frak A}_2 \dots$$
of $\sigma$-algebras of a standard measure space
 $(X,\mu,\frak A)$ with a continuous measure~$\mu$. Here ${\frak A}_0=\frak A$ coincides with the $\sigma$-algebra of all measurable sets.

A filtration is called ergodic (or quasi-regular, Kolmogorov, satisfying the
zero--one law) if the intersection of the $\sigma$-algebras is trivial:
$$\bigcap_n {\frak A}_n
={\frak N},$$
where ${\frak N}$ is the trivial $\sigma$-algebra.

Filtrations arise in the theory of random processes (stationary or not) as the sequences of ``pasts''; in the theory of dynamical systems, as filtrations generated by orbits of periodic approximations of group actions; in statistical physics, as filtrations of families of configurations coinciding outside some volume; and, finally and most importantly, in the theory of $C^*$-algebras and combinatorics, as  tail filtrations of path spaces of Bratteli diagrams (= $\mathbb N$-graded locally finite graphs). The problem of  classification of filtrations in the category of measure spaces or other categories is deep and quite topical.
It is mainly from the classification point  of view that the subclass of filtrations we suggest below and call standard is of interest for applications. The definition of this class is  invariant  under measure-preserving transformations, and it includes the simplest case of Bernoulli filtrations, i.e., filtrations generated by sequences of independent random variables. On the other hand, this class is far from exhausting the set of all ergodic filtrations, and one may say that it opens up the stratification of the set of ergodic filtrations.

\section{Filtrations and metrics}

\subsection{Markov filtrations}

For simplicity and the economy of space, we will deal only with
{\it Markov filtrations}, but the definitions and methods in the form given below apply to the class of all filtrations, with some purely technical complications. The limitation to the class of Markov filtrations is justified also by the fact that all filtrations arising as tail filtrations in the theory of Bratteli diagrams belong to this class, so it already covers a great number of applications.

Consider an arbitrary, in general inhomogeneous in time, one-sided Markov chain
$\{y_n\}_{n\geq 0}$ with arbitrary Borel state spaces which may depend on $n$. Given  $n>0$, let
 ${\frak A}_n$ be the $\sigma$-algebra  of all measurable subsets of the space of realizations of the Markov chain that can be described in terms of random variables with indices greater than or equal to
$n$. The obtained filtration will be called the filtration generated by the Markov chain (or, sometimes, the ``tail filtration''); below we give a description of such filtrations in invariant terms.\footnote{Sometimes it is convenient to label moments of time by negative integers: $\{\dots, x_n,x_{n+1}, \dots, x_{-1}, x_0\}$; this corresponds to considering the filtration of ``pasts''
$\{{\frak A}_n\}_{n\geq 0} $
where  ${\frak A}_n$, for  $n<0$,  is the $\sigma$-algebra containing all measurable subsets of the space of realizations of the Markov chain that can be described by conditions on random variables with indices less than or equal to
$n$. However, in this paper it is convenient for us to label moments of time by nonnegative integers. Depending on the choice of the orientation, one may speak about transition or cotransition probabilities, and about exit or entrance boundaries.}

Recall that any $\sigma$-algebra is determined by a measurable partition of the measure space, which is in turn determined by a system of conditional measures (canonical system of measures in the sense of Rokhlin):
the system of conditional measures determines the partition, and hence the $\sigma$-algebra, up to isomorphism.
Thus a filtration of $\sigma$-algebras gives rise to an infinite decreasing sequence of measurable partitions
 $\{\xi_n; n\leq 0\}$, which we will use in what follows.

The partition $\xi_0$ corresponding to the $\sigma$-algebra
 ${\frak A}_0$ is the partition into singletons.

A filtration can also be given by a decreasing sequence of subalgebras of
 $L^{\infty}$ type in the space $L^{\infty}_{\mu}(X)$. A filtration determines a limiting equivalence relation on the measure space (i.e., in general, a nonmeasurable partition) and gives rise in a canonical way to a von Neumann algebra, but here we will not discuss these relations.

In what follows we assume that almost all elements of all partitions  $\xi_n$, $n\leq 0$, are finite, and thus they are finite spaces equipped with (conditional) measures; hence the conditional measure on every element of the partition is determined by a finite-dimensional probability vector.

If all conditional measures on almost all elements of the partition  $\xi_n$ coincide and are uniform, with the number of points in the elements equal to $r_n$, then the filtration is called {\it homogeneous $r_n$-adic} (in particular, {\it dyadic} if  $r_n=2^n$); if the conditional measures are uniform, but the number of points in different elements can be different, then the filtration is called {\it semihomogeneous}; this is the most interesting and important case. It corresponds to so-called central measures on  path spaces of Bratteli diagrams. But the case of dyadic filtrations already contains all difficulties of the general theory. A specific case is the study of continuous filtrations, for which all conditional measures of all quotient partitions
 $\xi_n/\xi_{n-1}$, $n=1,2, \dots$, are continuous; here we do not consider this case, but the main methods described below apply to it, too.

Let us impose an additional finiteness condition:   {\it  a filtration is said to be of finite type if not only the elements of all partitions are finite, but also the collections of the probability vectors of the conditional measures corresponding to the elements of the partition $\xi_n$ for every
 $n$ are finite.} In other words, the collection of all elements of the partition $\xi_n$  can be divided into finitely many subsets so that in each subset the vectors of conditional measures coincide. By the very definition, the class of such filtrations is invariant under measure-preserving transformations.

The following assertion holds.

\begin{Prop}
An arbitrary  finite type filtration is isomorphic to a filtration corresponding to a Markov chain with finite state sets.
\end{Prop}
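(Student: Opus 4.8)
The plan is to realize the filtration concretely by ``coordinatizing'' the decreasing sequence of measurable partitions $\xi_0 \succeq \xi_1 \succeq \cdots$ (with $\xi_0$ the partition into points) that corresponds to $\{\mathfrak A_n\}$, converting the finite type hypothesis into the finiteness of the state sets of the Markov chain we build; equivalently, one is reconstructing a graded graph whose path space carries this filtration. Throughout I use the standing finiteness: the elements of each $\xi_n$ are finite sets carrying conditional measures, and --- crucially --- for each fixed $n$ there are only finitely many isomorphism classes of the pair (an element $C$ of $\xi_n$, together with the induced measured tower $\xi_0|_C \succeq \cdots \succeq \xi_n|_C$); call this finite set of ``local types at level $n$'' $V_n$, and write $v_n(x)\in V_n$ for the type of $\xi_n(x)$. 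In particular the cardinalities $|C|$, $C\in\xi_{n+1}$, are bounded by some $M_n<\infty$, hence so is the number of $\xi_n$-subelements of any element of $\xi_{n+1}$.

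First I would define the candidate chain. For each $n$ put $E_n=V_n\times\{1,\dots,M_n\}$ and $y_n(x)=\big(v_n(x),\pi_n(x)\big)$, where $\pi_n(x)$ is the index of $\xi_n(x)$ among the $\xi_n$-subelements of $\xi_{n+1}(x)$ with respect to some enumeration of those (at most $M_n$) subelements chosen measurably in the element $\xi_{n+1}(x)$. Two things are immediate: $y_n$ is $\mathfrak A_n$-measurable; and, inside every element of $\xi_{n+1}$, the coordinate $\pi_n$ separates the distinct $\xi_n$-subelements, so that $\xi_n=\xi_{n+1}\vee\zeta_n$ where $\zeta_n$ is the partition determined by $y_n$. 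Iterating this identity one gets $\xi_n=\big(\bigcap_k\xi_k\big)\vee\bigvee_{m\ge n}\zeta_m$, so that --- using, as usual for such coordinatizations, that the filtration is ergodic, i.e.\ that $\bigcap_k\xi_k$ is trivial --- one obtains $\mathfrak A_n=\sigma(y_n,y_{n+1},\dots)$ for all $n$; in particular the $\{y_n\}$ separate points. Hence $x\mapsto(y_n(x))_{n\ge0}$ is a measure-preserving isomorphism of $(X,\mu)$ onto $\prod_n E_n$ equipped with the law of $\{y_n\}$, carrying $\mathfrak A_n$ onto the $\sigma$-algebra generated by the coordinates with indices $\ge n$.

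It remains to see that $\{y_n\}_{n\ge0}$ is a Markov chain. Here one invokes Rokhlin's theory: $\mu$ disintegrates coherently along $\xi_n$, so conditionally on $\xi_n(x)=C$ the point $x$ is governed by the conditional measure $\mu_C$ and is independent of everything happening ``above'' level $n$ (of which larger elements of $\xi_{n+1},\xi_{n+2},\dots$ contain $C$). Now the past coordinates $(y_0(x),\dots,y_{n-1}(x))$ are a function of the position of $x$ inside $C$ alone, the future coordinates $(y_{n+1}(x),\dots)$ are a function of the nested chain $\xi_{n+1}(x)\subseteq\xi_{n+2}(x)\subseteq\cdots$ alone, and the conditional law of the past given $\xi_n(x)=C$ depends on $C$ only through its local type $v_n(C)$, which is the first component of $y_n$. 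Consequently, conditionally on $y_n$, the past is independent of the future --- exactly the Markov property --- and the chain has finite state sets $E_n$ by construction, which finishes the proof.

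\textbf{Main obstacle.} The genuinely delicate step is the Markov property and the measurability it rests on. The naive index $\pi_n$ by itself does \emph{not} yield a Markov chain (a lumping of a Markov chain need not be Markov); it is the extra, finitely many valued coordinate $v_n$, available precisely because the filtration is of finite type, that restores it. Moreover the two demands on the enumerations entering $\pi_n$ pull against each other: to get $\xi_n=\xi_{n+1}\vee\zeta_n$ one must break the symmetry between isomorphic (automorphism-equivalent) $\xi_n$-subelements of a common parent --- an inherently non-canonical choice --- whereas to keep the conditional law of the past a function of $v_n$ only, that choice must be made ``uniformly'' enough not to perturb conditional distributions. Verifying that a single measurable selection of all these enumerations can be made to satisfy both requirements, and that the relevant spaces of finite measured towers and selection maps are standard Borel, is where the ``purely technical complications'' mentioned in the paper have to be worked through; a related point that also needs the finiteness hypothesis is that the increments $\zeta_m$ generate the filtration modulo its trivial intersection.
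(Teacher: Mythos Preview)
Your argument is correct and follows essentially the same line as the paper's, which is only a two-sentence sketch: take as states the finitely many ``types'' furnished by the finite-type hypothesis, refine them as needed, and observe that the resulting chain is Markov with the prescribed tail filtration. Your construction $y_n=(v_n,\pi_n)$ is precisely a concrete implementation of the paper's ``choosing these sets by recursion on $n$, starting from the subsets mentioned in the definition of finiteness and subdividing these subsets if necessary to obtain a basis.''

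One small point of divergence: you invoke ergodicity to conclude that $\bigvee_{m\ge n}\zeta_m$ equals $\xi_n$, whereas the proposition as stated does not assume it. The paper's phrase ``subdividing \dots\ to obtain a basis'' implicitly allows one to throw in, at each level, an additional finite $\bigcap_k\mathfrak A_k$-measurable coordinate so that the resulting sequence generates even in the non-ergodic case; since such coordinates are tail-measurable they do not disturb the Markov property, and the state sets remain finite. With that amendment your argument covers the general statement. Your discussion of the ``main obstacle'' --- choosing enumerations that simultaneously separate siblings and are uniform over a type class --- is accurate and goes well beyond what the paper spells out.
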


The Markov filtration corresponding to a Markov process with finite state sets is, obviously, a  finite type filtration. Conversely, an arbitrary  finite type filtration can be realized as a filtration corresponding to a Markov chain with finite space sets, by choosing these sets by recursion on $n$, starting from the subsets mentioned in the definition of finiteness and subdividing these subsets if necessary to obtain a basis of the original space. Note that the filtration generated by a Markov chain with arbitrary state sets can be isomorphic to the filtration generated by a chain with finite state sets.

In what follows, we deal with  finite type filtrations generated by Markov chains.

\subsection{Transferring a metric}

We proceed to describe the main technical tool that allows one to introduce into the problem some analytic characteristics of filtrations. It is not related to the previous definitions and may be useful in various situations.

Consider an arbitrary Markov chain $\{x_n\}_{n \geq 0}$ with finite state sets and denote by $X$ the space of all its realizations, which is a general Markov compactum (in particular, it may be nonstationary). Denote by $\nu$
the Markov measure on $X$ and consider the filtration $\{{\frak A}\}_{n>0}$ of (``future'') $\sigma$-algebras. We will study metrics and semimetrics on the space $X$ that agree with the topology. A semimetric $\rho$  on $X$ is called a cylinder semimetric if there is $n$ such that $\rho(y,z)$ depends only on coordinates $y_k, z_k$ with
$k<n$. A metric $\rho$ is called an almost cylinder metric if it is the limit of cylinder semimetrics:
 $\rho(y,z)=\lim \rho_n(y,z)$. Clearly, the topology determined by an almost cylinder metric coincides with the topology of the compact space $Y$.\footnote{In the general case of a filtration in a measure space, one should use the notion of an {\it admissible metric on a measure space} introduced by the author and studied in \cite{VPZ}.}

We fix an almost cylinder metric $\rho$ and use the Markov filtration to define a sequence of semimetrics
 $\rho_0=\rho,\rho_1,\rho_2, \dots $ on the space $X$. The semimetric $\rho_n$  is constructed from the semimetric $\rho_{n-1}$ according to one and the same rule described below and called {\it transferring a metric}.

\begin{Def} Let $(X,\rho,\mu)$ be a (semi)metric measure space and $\xi$ be a measurable partition of $X$. Then on the quotient space
 $(X_{\xi},\mu_{\xi})$  there is a canonically defined semimetric $\rho_{\xi}$:
\begin{equation}\label{(Proj)}
\rho_{\xi}(x,y)\equiv K_{\rho}(\mu^x,\mu^y),
\end{equation}
where $x,y \in X$ and $\mu^x,\mu^y$ are the conditional measures of the partition $\xi$ on the elements containing the points $x,y$, respectively.
\end{Def}

Here $K_{\rho}(\cdot,\cdot)$ is the Kantorovich metric on the simplex of measures on the metric space
 $(X,\rho)$; recall the definition of this metric: the distance between two probability measures $\alpha_1,\alpha_2$ on a metric compact space $(Z,r)$ is defined as
\begin{equation}\label{K}
   K_r(\alpha_1,\alpha_2)=\inf_{\psi \in \Psi}\int_Z\int_Z
  r(a,b)d\psi(a,b),
\end{equation}
where  $\psi \in \Psi$ runs over the set of all measures on the space $Z\times Z$ with marginal (= coordinate) projections  $\alpha_1,\alpha_2$. One often says that measures $\psi$ are  ``couplings'' for the measures $\alpha_1,\alpha_2$.
Thus $\Psi$ is the set of all couplings.

Note that if $\rho$ is a metric rather than a semimetric, then
$K_{\rho}$ is a metric, too.

Observe two important properties of the operation that associates with a metric space the simplex of probability measures on this space equipped with the Kantorovich metric:

1) monotonicity proved in \cite{FA}: the inequality $\rho \leq k\rho'$ implies $K_{\rho}\leq rK_{\rho'}$;

2) linearity in the metric: $K_{a\rho_1+ b\rho_2}=aK_{\rho_1}+bK_{\rho_2}$, $a,b > 0$.

Let us return to our construction.  Successively apply the operation of transferring a semimetric to the spaces $(X/\xi_k\equiv
 X_{\xi_k},\rho_{k},\mu_{\xi_k})$ and partitions
 $\xi_{k+1}/\xi_k$, obtaining metrics $\rho_{k+1}$ on the spaces $X_{\xi_{k+1}}$,  $k=0,1,\dots$, which can be regarded as semimetrics on the original space $(X,\mu)$.

\section{Standardness: definition, standardness criterion, formulation in terms of martingales}

\subsection{Definition of standardness}

Our main definition is as follows.

 \begin{Def}
 A Markov chain is called standard if it is ergodic and for some initial metric
 $\rho=\rho_0$ on the Markov compactum $(X, \mu)$, the sequence of semimetrics $\rho_n$, $n>0$, satisfies the condition
 \begin{equation}\label{(S)}
  \lim_{n\to\infty}\int_X\int_X \rho_n(x,y)d\mu(x)\mu(y) = 0;
\end{equation}
in other words, the sequence of semimetric measure spaces
 $(X,\rho_n)$ collapses to the single-point measure space as $n \to \infty$.
 \end{Def}

The above monotonicity property of the operation of transferring a metric and the continuity of this operation with respect to the pointwise convergence of metrics immediately imply that if the standardness condition is satisfied for a given initial metric, then it is satisfied for any initial metric. Indeed, it follows from the monotonicity and linearity  that the condition holds for any cylinder semimetric, and then one should use the fact that any almost cylinder metric is a limit of cylinder semimetrics.\footnote{The same argument allows one to infer that a filtration is standard with respect to an arbitrary admissible semimetric if it is standard with respect to a single admissible metric.}

In other words, the following theorem holds.

\begin{Th}
The property of being a standard Markov filtration is invariant under the group of all measure-preserving transformations and  does not depend on the choice of the initial metric.
\end{Th}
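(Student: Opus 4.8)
The statement has two parts: invariance under measure-preserving isomorphisms, and independence of the choice of initial metric. The second part is essentially already argued in the paragraph preceding the theorem, so my plan is to make that argument precise and then dispatch the first part almost for free.

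The plan is to start from the independence of the initial metric. Fix a standard chain and suppose the standardness condition \eqref{(S)} holds for some almost cylinder metric $\rho = \rho_0$; I want to show it holds for an arbitrary almost cylinder metric $\rho'$. First I would treat the case where $\rho'$ is a cylinder semimetric: by definition $\rho'(y,z)$ depends only on finitely many coordinates, say those with index $< m$. The key point is that the operation of transferring a semimetric, iterated from level $0$ to level $m$, eventually ``forgets'' these coordinates — more precisely, for $n \ge m$ the semimetric obtained by transferring $\rho'$ compares conditional measures of $\xi_n$, and since $\rho'$ is measurable with respect to $\mathfrak{A}_0$ but its dependence is confined to coordinates $<m$, one checks that for $n$ large $\rho'_n$ is dominated by a multiple of $\rho_n$ built from a genuine metric $\rho$. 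Concretely: any cylinder semimetric is bounded above by $k\rho$ for a suitable constant $k$ and a fixed almost cylinder metric $\rho$ (here one uses that the relevant coordinate spaces are finite, so a cylinder semimetric is a bounded function); then monotonicity (property 1, from \cite{FA}) gives $K_{\rho'} \le k\, K_{\rho}$ at every stage, hence $\rho'_n \le k\rho_n$ pointwise for all $n$, and integrating and passing to the limit kills the left-hand side of \eqref{(S)} for $\rho'$. For a general almost cylinder metric $\rho' = \lim \rho'_n$ with $\rho'_n$ cylinder semimetrics, I would combine this with the continuity of the transfer operation under pointwise (equivalently monotone) convergence of metrics: each $\rho'_n$ already satisfies the collapsing condition, and the continuity lets one pass to the limit, or alternatively bound $\rho'$ itself by $k\rho$ directly since $\rho'$ is again a bounded measurable function and $\rho$ is an almost cylinder metric generating the topology.

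For the invariance under measure-preserving transformations, let $T\colon (X,\mu,\mathfrak{A}) \to (X',\mu',\mathfrak{A}')$ be an isomorphism of standard measure spaces carrying the filtration $\{\mathfrak{A}_n\}$ to $\{\mathfrak{A}'_n\}$ (equivalently, the sequence of partitions $\{\xi_n\}$ to $\{\xi'_n\}$). Given an almost cylinder metric $\rho$ on $X$, the push-forward $\rho' := \rho \circ (T^{-1} \times T^{-1})$ is an admissible metric on $X'$ (it need not be an almost cylinder metric for the transported cylinder structure, which is exactly why the first part of the theorem — independence of the metric among all admissible metrics, as noted in the footnote — is needed). Since $T$ intertwines the partitions, it intertwines conditional measures, and since the Kantorovich metric $K_\rho$ is defined purely in terms of $\rho$ and the measures, $T$ intertwines the transfer operation: $\rho'_n = \rho_n \circ (T^{-1}\times T^{-1})$ for every $n$. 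Because $T$ is measure preserving, $\int\!\!\int \rho'_n \, d\mu'\,d\mu' = \int\!\!\int \rho_n\, d\mu\,d\mu$, so \eqref{(S)} transfers verbatim; ergodicity is likewise preserved since $\bigcap_n \mathfrak{A}_n$ is carried to $\bigcap_n \mathfrak{A}'_n$.

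The main obstacle is the first part, and within it the honest justification that a cylinder semimetric (or an admissible metric) is dominated by a constant multiple of the chosen almost cylinder metric $\rho$ in a way that survives all iterations of the transfer. This rests on two ingredients already granted: monotonicity $\rho \le k\rho' \Rightarrow K_\rho \le k K_{\rho'}$, which propagates a pointwise bound through every level by induction on $n$; and the compactness/finiteness of the coordinate spaces, which guarantees that a cylinder semimetric is a bounded function and hence really is $\le k\rho$ for some finite $k$ (for a genuinely admissible metric on a measure space one instead invokes the corresponding boundedness/domination statement from \cite{VPZ}, as the footnote indicates). Once the uniform bound $\rho'_n \le k\rho_n$ is in hand, the conclusion is immediate from dominated convergence, and the reduction from almost cylinder metrics to cylinder semimetrics is handled by the continuity of transfer under pointwise limits together with linearity (property 2).
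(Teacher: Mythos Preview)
Your proposal is correct and follows essentially the same approach as the paper: the paper's entire argument is the paragraph immediately preceding the theorem, which invokes exactly the monotonicity of the transfer operation to handle cylinder semimetrics and then the continuity under pointwise limits to pass to almost cylinder metrics, just as you do. You supply more detail than the paper (in particular the explicit compactness argument giving $\rho' \le k\rho$, and the transport argument for invariance under measure-preserving isomorphisms together with the observation that this forces one to work with admissible rather than almost cylinder metrics, as in the footnote), but the skeleton is the same.
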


In other words, if a filtration is realized as the tail filtration of different Markov chains, then all these chains are standard or not simultaneously.

It should be noted that computing the iterated metric is not an easy task. But it is clear from above that there exist filtrations for which the standardness condition fails for some functions, and hence there exist nonstandard filtrations. The first example of a nonstandard filtration was suggested by the author in  \cite{70} (see also \cite{73, 93}): the filtration of pasts for a random walk over trajectories of a Bernoulli action of a free non-Abelian group. By now there are many such examples. A survey of the  state of the art in this field will be published in a separate paper.

The drawback of the above definition of standardness is that checking condition~(1) requires to compute the iterated metrics. It is desirable to have a criterion that would relate the metric $\rho_n$ directly to the initial metric, skipping the intermediate steps. We present such a criterion; its statement does not involve a Markov realization of a filtration, but whenever necessary we will use special couplings, in contrast to arbitrary couplings used in the definition of the Kantorovich metric. Another difference is that the condition must be satisfied not for one metric, but for all degenerate semimetrics of a special form.

   \subsection{The standardness criterion}

Given a measurable function $f \in L^{\infty}(X,\mu)$, consider the admissible semimetric
   $\rho_f(x,y)\equiv |f(x)-f(y)|$. The standardness criterion formulated below, as well as its reformulation in terms of martingales, is just a decoding of the standardness condition for initial semimetrics of the form  $\rho_f$.

First observe the following fact.

\begin{Prop}
If the standardness condition holds for every semimetric of the form
$\rho_f(x,y)=|f(x)-f(y)|$ with $f \in L^{\infty}(X,\mu)$, then it holds for every metric.
\end{Prop}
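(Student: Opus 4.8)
\medskip
\noindent\emph{Proposed proof.}
The plan is to reduce everything to Theorem~1: since the standardness condition does not depend on the choice of the initial metric, it suffices, assuming that condition holds for every $\rho_f$ with $f\in L^{\infty}(X,\mu)$, to exhibit a single admissible metric $\rho$ on the Markov compactum for which the transfer sequence $\rho_0=\rho,\rho_1,\rho_2,\dots$ satisfies the standardness condition. I would build such a $\rho$ out of coordinate functions. Writing $X=\prod_{k\ge0}X_k$ with finite state sets $X_k=\{0,1,\dots,s_k-1\}$, let $g_k\in L^{\infty}(X,\mu)$ be the bounded cylinder function $g_k(y)=y_k$, so that $\rho_{g_k}(y,z)=|y_k-z_k|$. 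By hypothesis the standardness condition holds for each $\rho_{g_k}$.

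Then I would fix weights $a_k>0$ with $\sum_k a_k s_k<\infty$ (for instance $a_k=2^{-k}/s_k$) and set $\rho=\sum_{k\ge0}a_k\rho_{g_k}$. This series converges uniformly, so $\rho$ is a continuous semimetric; and $\rho(y,z)=0$ forces $y=z$, so $\rho$ is a metric inducing the product topology. Being the uniform limit of the cylinder semimetrics $\sum_{k<m}a_k\rho_{g_k}$, it is an almost cylinder (hence admissible) metric, and by Theorem~1 it is enough to verify the standardness condition for this particular $\rho$.

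The verification is where the two stated properties of the transfer operation enter. For each $m$, decompose $\rho=\sum_{k<m}a_k\rho_{g_k}+\tau_m$ with $\tau_m=\sum_{k\ge m}a_k\rho_{g_k}$ (again a semimetric). Iterating the linearity of the transfer operation at every level yields
\[
\rho_n=\sum_{k<m}a_k(\rho_{g_k})_n+(\tau_m)_n\qquad\text{for all }n .
\]
Since $\tau_m\le\varepsilon_m:=\sum_{k\ge m}a_k s_k$ pointwise, every Kantorovich distance between measures for this semimetric is also $\le\varepsilon_m$ (being an infimum of averages of $\tau_m$), and inductively $(\tau_m)_n\le\varepsilon_m$ for every $n$. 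Taking $\int_X\int_X(\cdot)\,d\mu(x)\,d\mu(y)$ of the displayed identity, fixing $m$, and letting $n\to\infty$, the finite sum tends to $0$ because the standardness condition holds for each $\rho_{g_k}$; hence $\limsup_n\int_X\int_X\rho_n\,d\mu(x)\,d\mu(y)\le\varepsilon_m$, and since $\varepsilon_m\to0$ the standardness condition holds for $\rho$. By Theorem~1 it then holds for every metric.

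I do not anticipate a serious obstacle: the argument is largely bookkeeping with the already-established monotonicity and linearity of metric-transfer. The one delicate point is interchanging $\lim_{n\to\infty}$ with the infinite sum defining $\rho$, which is dealt with by peeling off the uniformly small tail $\tau_m$ and using that transferring a semimetric cannot enlarge a constant upper bound. If one prefers not to appeal to Theorem~1, one can instead argue directly on an arbitrary cylinder semimetric $\sigma$: it factors through a finite quotient $F$ of $X$, and the Kuratowski embedding $b\mapsto\sigma(\,\cdot\,,b)$ exhibits $\sigma=\max_{b\in F}\rho_{f_b}\le\sum_{b\in F}\rho_{f_b}$ for finitely many $f_b\in L^{\infty}$; linearity and monotonicity of the transfer, together with the approximation of a metric by cylinder semimetrics already used for Theorem~1, then complete the proof.
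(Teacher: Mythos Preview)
Your argument is correct and rests on exactly the same two ingredients the paper uses (monotonicity and the claimed linearity of the transfer, together with approximation by cylinder semimetrics), but the logical path is slightly different. The paper's proof is the single sentence ``every metric can be approximated by linear combinations of semimetrics of the form $\rho_f$'': it works directly on an arbitrary (almost cylinder) metric, writes it as a limit of finite linear combinations of $\rho_f$'s, and lets linearity and monotonicity push the standardness condition through. Your main route instead manufactures one particular admissible metric $\rho=\sum a_k\rho_{g_k}$ out of coordinate functions, verifies the condition for this $\rho$ via the same linearity/monotonicity bookkeeping, and then invokes Theorem~1 to transport the conclusion to every metric. This is a legitimate detour---Theorem~1 is already in hand---but it is longer than necessary, and it uses the hypothesis only for the countably many coordinate functions $g_k$ rather than for general $f$.

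Your final paragraph, on the other hand, \emph{is} the paper's argument, made explicit: bounding an arbitrary cylinder semimetric by a finite sum $\sum_{b\in F}\rho_{f_b}$ via the Kuratowski embedding is precisely a concrete way of saying ``approximate by linear combinations of $\rho_f$''. So the two approaches you give bracket the paper's proof---the second one essentially coincides with it, while the first one reaches the same conclusion through Theorem~1 at the cost of a little extra construction.
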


Indeed, every metric can be approximated by linear combinations of semimetrics of the form $\rho_f$.

Consider an arbitrary  finite type filtration
 $\{\xi_n\}_n$ and the partition $\xi_n$. For what follows it is convenient to introduce a combinatorial structure on its elements. Almost every element $C \in \xi_n$ is equipped with a hierarchy of partitions obtained by restricting to it the previous partitions $\xi_1,\xi_2, \dots, \xi_{n-1}$.  Thus the finite set $C$ has a structure of a finite tree of degree $n$: the root of the tree is the element $C$ itself; the vertices of the $n$th level are the points of the set $C$; subtrees correspond to elements of the partitions $\xi_k$, $k<n$, constituting the element $C$; and the maximal branches of the tree are identified with the vertices of the $n$th level. On the set $C$, or on the set of maximal branches of the tree, the conditional measure
 $\mu^C$ is defined. We will denote the element $C\in \xi_n$ equipped with the conditional measure and the tree structure described above by
$\bar C$. In terms of the Young graph, for instance, the tree corresponds to a Young diagram $\lambda$, and its maximal branches, or the vertices of the last level, are the Young tableaux of  shape $\lambda$. For a dyadic filtration, all elements of the partitions are binary trees equipped with the uniform measure on the set of maximal branches.

Consider a coupling for two equipped elements $\bar C_1, \bar C_2$ of the partition $\xi_n$, i.e., a measure $\psi \in \Psi(\bar C_1, \bar C_2)$ on the direct product of trees $\bar C_1 \times \bar C_2$ supported by a tree equipped with a measure such that the projections of this tree and measure to the coordinates $\bar C_1, \bar C_2$ coincide with the corresponding structures in
$\bar C_1$  and $\bar C_2$, respectively.  Such measures will be called {\it special couplings of trees with measures}.
In our case (of finite type filtrations) all spaces $\bar C$ are finite, so that couplings are given by matrices.

Now we are ready to formulate the standardness criterion as a property of filtrations.

 \begin{Def}
A filtration $\{\xi_n\}_{n\in N}$ in a space $(X,\mu)$ satisfies the standardness criterion if for any measurable function
 $f \in L^{\infty}(X,\mu)$ and any
 $\varepsilon >0$ there exists $N$ such that for all $n>N$ the following inequality holds:
 \begin{multline}\label{4}(\mu\times \mu) \big\{(x_1,x_2) \in X \times X:\\
 \int_{C^{x_1}\times C^{x_2}}
 |f^{\bar C^{x_1}}(z_1) - f^{\bar C^{x_2}}(z_2)|\,d\psi^{C_1,C_2}(z_1,z_2) <\varepsilon\big\}>1-\varepsilon.
 \end{multline}
 \end{Def}

Let us make clear that $C^{x_i}$ is the element of the partition $\xi_n$ that contains the point $x_i$, $i=1,2$, and $\psi^{C_1,C_2} $ is a special coupling of these two elements. It is not needed to take the minimum over all special couplings, since the existence of only one coupling with the desired property is sufficient. Informally speaking, condition~(4) means that for sufficiently large $n$ the restrictions of an arbitrary function from
 $L^{\infty}$ to most elements of the partition $\xi_n$ coincide up to $\varepsilon$ and to a coupling that agrees with the conditional measures and tree structures on these elements.

It suffices to verify the criterion only for functions $f$ with finitely many values, which turns checking the criterion into a purely combinatorial problem. On the other hand, it suffices to verify the criterion for a single one-to-one function, but then the complexity of the criterion will be comparable with that of the original standardness condition for metrics.

We emphasize that the conditions on special couplings between partition elements in the above criterion is much stronger than the condition on couplings in the definition of the Kantorovich metric, since a coupling is required not only to preserve the measure, but also to preserve the tree structures. Hence one could call them ``Markov'' couplings.

The statement of the standardness criterion implies the following important property.

\begin{Rem}
If the criterion is satisfied for a filtration  $\{\xi_n\}_{n>0}$, then it is satisfied for the quotient filtration
$\{\xi_{n+k}/\xi_k\}_{n>0}$ for every $k>0$.
\end{Rem}

Finally, we emphasize that the criterion condition  is invariant by the very definition, i.e., if it holds for a filtration, then it also holds for any isomorphic filtration. Indeed, it involves only notions related to the filtration and no other notions (e.g., metrics, as in the first statement). Although we formulated the criterion for  discrete type filtrations, it applies with minimal modifications to arbitrary filtrations.

\begin{Th}
A Markov filtration is standard if and only if it satisfies the standardness criterion.
 \end{Th}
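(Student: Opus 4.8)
The plan is to establish the two implications separately, both by decoding what the standardness condition $\lim_n \iint \rho_n\, d\mu\, d\mu = 0$ means when the initial semimetric is of the form $\rho_f(x,y) = |f(x) - f(y)|$, and then using the reduction results already proved in the excerpt (the theorem that standardness is independent of the initial metric, and the Proposition that it suffices to check the condition for semimetrics $\rho_f$ with $f \in L^\infty$, which in turn reduces to finitely-valued $f$).

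First I would prove that the standardness criterion implies standardness. Fix $f \in L^\infty$ and $\varepsilon > 0$, and take $N$ as in the criterion. The key observation is that the iterated semimetric $\rho_n$ at level $n$, obtained by $n$-fold transfer starting from $\rho_f$, is dominated at each element $C \in \xi_n$ by the value $\int_{\bar C_1 \times \bar C_2} |f^{\bar C_1}(z_1) - f^{\bar C_2}(z_2)|\, d\psi(z_1,z_2)$ for any special (Markov) coupling $\psi$: indeed, transferring a metric across one partition replaces conditional measures by their Kantorovich distance, which is an infimum over couplings, and iterating this $n$ times amounts to building a coupling of the two trees level by level — precisely a special coupling of $\bar C_1$ and $\bar C_2$. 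Thus $\rho_n(x_1,x_2) \le \int |f^{\bar C^{x_1}} - f^{\bar C^{x_2}}|\, d\psi^{C_1,C_2}$. The criterion says this quantity is $< \varepsilon$ off a set of $(\mu\times\mu)$-measure $< \varepsilon$; since $f$ is bounded, $\rho_n$ is bounded by $2\|f\|_\infty$, so $\iint \rho_n\, d\mu\, d\mu < \varepsilon + 2\|f\|_\infty\,\varepsilon$, which tends to $0$. By the Proposition reducing arbitrary metrics to semimetrics $\rho_f$ and by the invariance theorem, standardness follows. (Ergodicity is part of both the criterion's hypotheses in context and the definition, so no issue there.)

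For the converse, suppose the filtration is standard; I would show the criterion holds. Fix $f \in L^\infty$, which we may take finitely-valued, and $\varepsilon > 0$. Standardness with initial semimetric $\rho_f$ gives $\iint \rho_n\, d\mu\, d\mu \to 0$, so by Chebyshev, for large $n$ the set of pairs $(x_1,x_2)$ with $\rho_n(x_1,x_2) < \varepsilon^2$ has measure $> 1 - \varepsilon$. Now $\rho_n(x_1,x_2)$ is, essentially by construction, the value of an \emph{optimal} Markov coupling of the trees $\bar C^{x_1}, \bar C^{x_2}$ for the cost $|f^{\bar C^{x_1}}(z_1) - f^{\bar C^{x_2}}(z_2)|$ — the transfer operation is exactly the iterated Kantorovich construction, and unwinding it produces a concrete special coupling $\psi^{C_1,C_2}$ achieving (or, in the continuous case, approaching) this value. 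For that coupling, $\int |f^{\bar C^{x_1}}(z_1) - f^{\bar C^{x_2}}(z_2)|\, d\psi < \varepsilon^2 < \varepsilon$, which is exactly inequality~\eqref{4}. The remark about quotient filtrations and the fact that the criterion only needs \emph{some} coupling, not the minimum, make this direction the easier one once the identification of $\rho_n$ with an iterated optimal Markov transport is in place.

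The main obstacle is precisely that identification: one must verify carefully that $\rho_n$ produced by $n$ successive applications of the transfer operation of the Definition equals the Kantorovich-type distance computed with couplings that respect the \emph{entire} tree structure on the elements of $\xi_n$, not merely the top-level conditional measures. This requires an inductive argument showing that a level-by-level composition of one-step optimal couplings is itself a special (tree-respecting) coupling, and conversely that every special coupling arises this way, together with the gluing lemma for Kantorovich couplings along a Markov chain. The monotonicity and linearity properties of $K_\rho$ recalled in the excerpt, plus continuity under pointwise convergence of metrics, are the tools that make the induction go through and that handle the passage from finitely-valued $f$ to general $f \in L^\infty$ and from $\rho_f$ to arbitrary admissible metrics.
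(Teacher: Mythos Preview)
Your proposal is correct and follows essentially the same approach as the paper: the paper's proof is a one-paragraph remark that condition~(4) is nothing but a concise formula for the $n$-fold transfer of the semimetric $\rho_f$, i.e., that the criterion is a ``decoding'' of condition~(3) for initial semimetrics $\rho_f$, and your plan is precisely a detailed execution of that decoding, together with the reductions via the Proposition and the invariance theorem that the paper already has in place. The only point you flag as an obstacle --- the identification of $\rho_n$ with the optimal value over tree-respecting (Markov) couplings --- is exactly what the paper's sentence ``the condition on couplings of trees with measures appeared exactly in this way'' is asserting without proof.
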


\begin{proof}
Essentially, we must verify that for semimetrics of the form $\rho_f$ where $f$ is a function from $L^{\infty}$, the criterion is a decoding of condition~(3). In other words, condition~(4) is a concise formula describing all successive transfers of the semimetric along the partitions of the filtration: the condition on couplings of  trees with measures appeared exactly in this way.
\end{proof}

Let us specify the standardness criterion for homogeneous filtrations and functions with finitely many values. For clarity, we restrict ourselves to dyadic filtrations and indicator functions of sets. What does the criterion mean in this case?

In this case, an element of the $n$th partition is a set consisting of $2^n$ points equipped with the uniform measure and a structure of a binary tree (of height~$n$), and the restriction
of an indicator function is a $0-1$ vector of dimension~$2^n$. In the case of a uniform measure,   as couplings  we may take not Markov,
but one-to-one maps
(which preserve the uniform measure), that is, elements of the group preserving the tree structure, i.e., automoprhisms of the binary tree. Hence the distance between the restrictions of a function to two trees is the distance between two orbits of the group of automorphisms of the tree acting on the vertices of the unit cube of dimension
 $2^n$. Thus the standardness criterion means that for every $\varepsilon>0$ there is $N$ such that for all elements of the partition $\xi_n$ with $n>N$ from some set of measure $>1-\varepsilon$, the restrictions of the indicator function lie on orbits of the action of the group $D_n$ of automoprhisms of the tree for which the (Hamming) distance is less than
$\varepsilon$.
This observation can easily be extended to the case of other filtrations.

The meaning of the standardness criterion is that the restrictions of any function to various elements of the partition have asymptotically  the same behavior with respect to the tree structure (up to automorphisms, or couplings). {\it Hence the standardness can be interpreted as the asymptotic Bernoulli property for filtrations, i.e., as an analog of the independence of a sequence of random variables.}

Historically, the statement of the standardness criterion for homogeneous filtrations preceded the standardness condition~(3) given above. The fact that if the criterion is satisfied then the filtration is standard (in the homogeneous case, Bernoulli) was proved in 1970 by the author (\cite{70, 73},
see \cite{93}), who simultaneously gave the first example of a nonstandard filtration.

\subsection{Characterization of standard filtrations in terms of martingales}

Now we give yet another formulation of the standardness criterion, in terms closer to the theory of random processes. Here we use the terminilogy and properties of Markov processes.

Consider a sequence of scalar (e.g., real) random variables $f_n$, $n>0 $, that constitute a Markov chain with finite sets of transitions (but with arbitrary state sets), in general nonstationary, and the filtration generated by this chain: $\{{\frak A}_n; {\frak A}_n= \ll f_k, k \geq n\gg, \; n=1,2, \dots \}$. \
Assume that the zero--one law holds, i.e., the filtration is ergodic.

We will rephrase the standardness criterion for this filtration assuming that all  $f_n$ have finite first moments, and explain in what sense the criterion is a strengthening of Doob's martingale convergence theorem. The latter  says, for instance, that for all $k$
$$ \lim_n E[f_k|{\frak A}_n] = Ef_k$$
(the almost everywhere convergence of conditional expectations).

The same theorem can be applied not to the random variables themselves, but to their conditional distributions: as $n \to \infty$, the conditional distribution of the random variable $f_1$ (or of several first variables $f_1,\dots, f_k$) with respect to the $\sigma$-algebra ${\frak A}_n$, $n>k$, converges almost everywhere to the unconditional distribution. This fact uses only the ergodicity of the filtration (the zero--one law). For our purposes it is convenient to eliminate the limiting (unconditional) distribution from this statement and rephrase the theorem as follows: the distance between the conditional distributions given $f_n=x_n$ and $f_n=y_n$ tends to zero as
$n\to\infty$ for almost all pairs $(x_n,y_n)$ of trajectories of the Markov chain.

The following assertion is a maximal (``diagonal'') strengthening of these theorems in which $k$ is not fixed but equal to $n$, i.e., tends to infinity simultaneously with the number of the $\sigma$-algebra with respect to which the conditional expectation is taken. This condition does not hold for all Markov chains, but only for standard filtrations (standard Markov chains).
Let us  formulate it precisely.

Consider the conditional distributions $\mu^x_n$ and $\mu^y_n$ of the first $n-1$ random variables
$ \{f_1,f_2, \dots , f_n\}$ given $f_{n+1}=x$
and $f_{n+1}=y$; the Markov property means that these conditional distributions are discrete measures on the set $\mathbb R^n$ of vectors   (trajectories of the chain)
which depend only on the values $x$ and $y$.  Take a separable metric $\rho$ in the space of all trajectories of the process, and approximate it by a metric on the set of these vectors from $\mathbb R^n$. Then consider the value  $$r_n(x,y)=\min_{\psi \in \Psi_m} E K_{\rho}(\mu^x_n, \mu^y_n),$$
where the minimum is taken over all Markov couplings, as in the standardness criterion. We emphasize that the Markov condition imposed on couplings (i.e., the requirement that the projections to the coordinates should preserve not only the measure, but also the order structure) is of crucial importance.

The above argument imply the following theorem.

 \begin{Th}
 A Markov chain is standard if and only if
  $$\lim_n\int_{X\times X}r_n(x,y)d\mu(x)d\mu(y)=0.$$
 \end{Th}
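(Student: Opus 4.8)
The plan is to show that Theorem~4 is merely a re-encoding of the standardness criterion (Theorem~3 together with the preceding Definition) applied to the coordinate functions of the Markov chain, so that nothing beyond the equivalences already established is required. First I would fix a Markov realization of the given filtration by the chain $\{f_n\}_{n>0}$ and pick a separable metric $\rho$ on the trajectory space, approximated by cylinder metrics $\rho^{(n)}$ on the finite-dimensional blocks $(f_1,\dots,f_n)$; by the Theorem on invariance of standardness with respect to the initial metric, it is enough to test the standardness condition~(3) against this $\rho$. The key observation is that the quotient partition $\xi_{n+1}/\xi_n$ restricted to an element of $\xi_{n+1}$, together with the induced hierarchy $\xi_1,\dots,\xi_n$, is exactly the ``equipped element'' $\bar C$ described before the criterion, and its conditional measure is precisely the conditional distribution $\mu^x_n$ of $(f_1,\dots,f_n)$ given $f_{n+1}=x$. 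Thus iterating the transfer operation from the Definition of transferring a metric $n$ times, starting with $\rho$, produces a semimetric $\rho_n$ on $X$ whose value at $(x,y)$ is the Kantorovich distance between $\mu^x_n$ and $\mu^y_n$ computed over couplings compatible with the tree structure — i.e.\ exactly the quantity $r_n(x,y)$ with the Markov (order-preserving) couplings.

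Second I would make the identification $\rho_n(x,y)=r_n(x,y)$ precise by induction on $n$. The base case $n=0$ is the definition $\rho_0=\rho$. For the inductive step, the operation of transferring a metric along $\xi_{k+1}/\xi_k$ replaces the current semimetric by the Kantorovich metric between conditional measures; unwinding what this means combinatorially on a tree of depth $k+1$ — the conditional measure on an element of $\xi_{k+1}$ splits along $\xi_k$ into sub-blocks, couplings must respect this splitting, and so on recursively down to $\xi_1$ — yields exactly the constraint that defines a special (Markov) coupling of equipped trees. Here I would invoke the monotonicity and linearity of the Kantorovich functor (properties 1 and 2 recalled in the text) to pass freely between the cylinder approximations $\rho^{(n)}$ and the limiting metric $\rho$, and to justify that the limit $\rho_n(x,y)=\lim_m$ over cylinder truncations coincides with $r_n(x,y)$ as defined via $\Psi_m$.

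Finally, granting the identity $\int_{X\times X}\rho_n\,d\mu\,d\mu=\int_{X\times X}r_n\,d\mu\,d\mu$, Theorem~4 follows immediately: the left-hand side tends to $0$ iff the standardness condition~(3) holds for the initial metric $\rho$, which by the invariance Theorem is equivalent to standardness of the Markov chain. (Ergodicity is built into the definition of standardness and is also what guarantees, via Doob, that the unconditional limit exists; but as stressed in the text the ``diagonal'' statement with $k=n$ is genuinely stronger and is captured precisely by the vanishing of $\int r_n$.) I expect the main obstacle to be the bookkeeping in the inductive step — verifying carefully that the class of couplings produced by $n$-fold iteration of the one-step transfer operation is exactly the class of order-preserving (Markov) couplings of the depth-$n$ equipped trees, and in particular that no ``extra'' couplings sneak in or are lost when one passes to the projective limit over cylinder metrics. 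This is essentially the same content as the proof of Theorem~3 (the criterion is a ``concise formula describing all successive transfers''), so the argument is more a matter of writing out that decoding explicitly for the coordinate functions $f_n$ than of introducing any new idea.
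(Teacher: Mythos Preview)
Your proposal is correct and follows the same line as the paper. The paper itself offers no separate proof beyond the sentence ``The above argument imply the following theorem,'' meaning that the quantity $r_n(x,y)$ built from Markov (order-preserving) couplings of the conditional distributions $\mu^x_n,\mu^y_n$ is nothing other than the $n$-fold iterated transferred semimetric $\rho_n(x,y)$, so that the displayed condition is literally condition~(3); your inductive identification $\rho_n=r_n$ and the appeal to invariance under change of initial metric make explicit precisely what the paper leaves implicit, and your closing remark that this is ``the same content as the proof'' of the standardness criterion matches the paper's own view that (4) is ``a concise formula describing all successive transfers.''
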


Now we will relate this statement to limit shape theorems. Regarding trajectories of the Markov chain as paths in the Bratteli diagram, the standardness of a central measure on paths can be formulated as follows:
{\it for every $\epsilon>0$ there is $N$ such that for every $n>N$ there exists a vertex $v_n$ of the $n$th level of the graph such that the measure of the set of paths that meet this level at vertices from the
$\epsilon$-neighborhood of  $v_n$ is not less than $1-\epsilon$}. Here a neighborhood is understood in the sense of the $n$-iterated arbitrary initial metric. In examples where vertices of the graph are some sort of configurations, this fact turns into a theorem on the concentration of a random distribution near some configuration.

\section{Isomorphism of standard finitely isomorphic filtrations}

\subsection{Finite invariants and isomorphism}

Above we have defined the notion of a standard Markov chain and a standard Markov filtration. Let us see how standard filtrations correlate with all the other ones.

\begin{Def}
Two filtrations $\{{\frak A}\}_n$ and $\{{\frak A'}\}_n$ of measure spaces $(X,\mu,\frak A)$ and $(X',\mu',\frak A')$ are called finitely isomorphic if for every  $n$ there exists an automorphism of the space $(X,\mu)$ that sends ${\frak A}_k$, $k=1, \dots, n$, to ${\frak A'}_k$, $k=1, \dots, n$.
\end{Def}

It is not difficult to describe finite invariants of semihomogeneous filtrations (recall that these are filtrations for which the conditional measures of almost all elements of all partitions $\xi_n$ are uniform\footnote{The conditional measures of the partitions
$\xi_n/\xi_{n-1}$ are not necessarily uniform in this case.}). These are collections, for each $n$, of the measures of the unions of  all elements of the partition
$\xi_n$ that have isomorphic structures of the restrictions of the previous partitions. For
$n=1$, they are just the measures of the unions of all elements of $\xi_1$
 of cardinality 1, 2, etc.; for $n=2$, these are the measures of the unions of all elements of $\xi_2$ that are comprised of elements of $\xi_1$ with given  cardinalities; and so on. This collection of numbers is finite for every $n$ (for semihomogeneous finite type filtrations) and subject to no restrictions. It is convenient to encode finite invariants of finite type filtrations by a multigraph (generalized Bratteli diagram) with a fixed central measure on the set of its paths. An equipped graph allows one to model not only semihomogeneous filtrations, but arbitrary finite type filtrations.

The standardness imposes some restrictions on these collections of numbers, and in this section we consider only those finite isomorphism classes for which a standard partition does exist. The problem of finding estimates on the growth of the sequence of finite invariants that would guarantee the existence of standard filtrations is of great interest.

The intersection of the $\sigma$-algebras is not an invariant of finite isomorphism classes, so it makes sense to consider finite isomorphism for ergodic filtrations. On the other hand, as we know, finite isomorphism does not imply isomorphism even for ergodic filtrations. However, the following important fact holds.

 \begin{Th}
 Two finitely isomorphic standard ergodic Markov filtrations are metrically isomorphic.
\end{Th}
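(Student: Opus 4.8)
The plan is to build the metric isomorphism as a limit of the finite isomorphisms provided by the hypothesis, using standardness to control the tails. Fix the two standard ergodic Markov filtrations $\{{\frak A}_n\}$ and $\{{\frak A}'_n\}$ on $(X,\mu)$ and $(X',\mu')$, realized by finite type Markov chains, with associated partitions $\{\xi_n\}$ and $\{\xi'_n\}$. By hypothesis, for every $n$ there is an automorphism $T_n\colon(X,\mu)\to(X',\mu')$ carrying $\xi_k$ to $\xi'_k$ for all $k\le n$. The naive hope — that the $T_n$ converge — fails, because $T_n$ is only constrained on the first $n$ levels and is arbitrary on the tail; two isomorphisms agreeing on $\xi_1,\dots,\xi_n$ can differ wildly. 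So the real content is to show that one can \emph{choose} the $T_n$ compatibly, i.e. modify $T_{n+1}$ on the level-$(\le n)$ structure so that it agrees with $T_n$ up to a set of measure $<2^{-n}$, after which $T=\lim T_n$ exists a.e.\ by Borel--Cantelli and is the desired isomorphism.

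First I would set up the approximation scheme. Given $T_n$ (an isomorphism of the first $n$ levels) I want to produce $T_{n+1}$ agreeing with $T_n$ off a small set. Start from \emph{any} isomorphism $S$ of the first $n+1$ levels. Both $S$ and $T_n$ identify $\xi_k\leftrightarrow\xi'_k$ for $k\le n$; hence $T_n\circ S^{-1}$ is an automorphism of $(X',\mu')$ preserving each $\xi'_k$, $k\le n$, i.e.\ it acts fiberwise over $\xi'_n$ by measure-preserving bijections of each equipped element $\bar C'\in\xi'_n$ that respect the internal tree structure (the restrictions of $\xi'_1,\dots,\xi'_{n-1}$). The task is to realize this fiberwise ``tree automorphism'' by an automorphism that \emph{also} respects $\xi'_{n+1}$ on most of the space — and that is precisely what the standardness criterion (Definition of the standardness criterion, and the dyadic reformulation via automorphisms of the tree) buys us: for large $n$, the equipped elements $\bar C'$ of $\xi'_n$ sitting inside a common element of $\xi'_{n+1}$ are, up to $\varepsilon$ and up to a Markov coupling, all ``the same.'' Thus the fiberwise tree automorphism on the $\bar C'$'s inside a given element of $\xi'_{n+1}$ can be glued into a single tree automorphism of that $(n+1)$-level element, at the cost of altering a set of conditional measure $<\varepsilon$. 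Summing over the elements of $\xi'_{n+1}$, we correct $S$ on a set of total measure $<\varepsilon$; choosing $\varepsilon=\varepsilon_n$ summable and $N$ large enough (allowed since the criterion is asymptotic in $n$, and by the Remark it passes to the quotient filtrations $\xi_{n+k}/\xi_k$ so it is available at every depth) yields $T_{n+1}$ with $\mu\{x: T_{n+1}x\ne T_n x\}<2^{-n}$.

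Next, Borel--Cantelli gives a measurable $T\colon X\to X'$ with $Tx=T_n x$ eventually, a.e.; since each $T_n$ is measure-preserving and the agreement is off a summable family, $T$ is measure-preserving, and the symmetric construction (swapping the roles of the two filtrations, or running the same argument for $T^{-1}$) shows $T$ is invertible a.e. Finally $T$ carries $\xi_k$ to $\xi'_k$ for \emph{every} $k$: for fixed $k$, $T$ agrees with $T_n$ (for all large $n$) off a null set, and $T_n$ already identifies $\xi_k$ with $\xi'_k$. Hence $T$ is the sought metric isomorphism of filtrations.

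I expect the main obstacle to be exactly the gluing step in the second paragraph: turning the ``$\varepsilon$-sameness of elements up to Markov couplings'' delivered by the standardness criterion into an honest measure-preserving bijection that simultaneously (i) extends/approximates the prescribed fiberwise tree automorphism coming from $T_n\circ S^{-1}$ and (ii) respects the level-$(n+1)$ partition, all with a genuine error bound. This requires care because a Markov coupling is a priori a bistochastic object, not a bijection; in the semihomogeneous (e.g.\ dyadic) case one uses that the relevant couplings can be taken to be elements of the automorphism group of the tree, so Birkhoff's theorem / a matching argument converts the coupling into a bijection, but in the general finite type case one must first invoke Proposition (finite type $\Rightarrow$ Markov with finite state sets) and a marriage-lemma argument to pass from the near-coupling to a near-bijection. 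Controlling the accumulation of these errors across the recursion — making sure the ``bad'' set at stage $n+1$ is not inflated by earlier corrections — is the delicate bookkeeping, but it is handled by the summability of $\varepsilon_n$ together with the quotient-stability of the criterion (the Remark).
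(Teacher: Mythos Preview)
Your overall architecture---build the isomorphism as a limit of the finite isomorphisms, using standardness for control and the Remark to pass to quotients---is the same as the paper's. The divergence is in how convergence is enforced. The paper does \emph{not} attempt to arrange $\mu\{T_{n+1}\ne T_n\}<2^{-n}$. Instead it fixes a total sequence $f_1,f_2,\ldots\in L^\infty$, applies the standardness criterion to each $f_j$ to conclude that $f_j$ is determined up to $\alpha_j$ (in $L^1$) by the tree structure of $\xi_{n_j}$-elements alone, and hence that \emph{any} level-$n_j$ isomorphism moves $f_j$ by at most $\alpha_j$. The recursive construction then proceeds on successive quotients $X/\xi_{n_k}$, and convergence is extracted from the totality of the $f_j$: the isomorphism is specified through its action on a basis rigidly attached to the filtration, not through pointwise stabilization of the maps $T_k$.

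Your gluing step, by contrast, demands pointwise agreement, and this is where there is a genuine gap. You need to approximate $\sigma=T_n\circ S^{-1}$ (a level-$n$ automorphism of $X'$) by a level-$(n{+}1)$ automorphism $\tau$ with $\mu'\{\sigma\ne\tau\}<\varepsilon$. But $\sigma$ permutes $\xi'_n$-elements with no regard for the $\xi'_{n+1}$-grouping, and the fact that ``all $\xi'_n$-elements look $\varepsilon$-alike'' does not help: replacing $\sigma(C)$ by an isomorphic element lying in the correct $\xi'_{n+1}$-block moves \emph{every} point of $C$, not an $\varepsilon$-fraction. Concretely, take four isomorphic $\xi'_n$-elements $A,B,C,D$ grouped as $\{A,B\}$ and $\{C,D\}$ in $\xi'_{n+1}$; the level-$n$ automorphism swapping $A\leftrightarrow C$ and fixing $B,D$ cannot be pointwise $\varepsilon$-approximated by any level-$(n{+}1)$ automorphism when $\varepsilon<\tfrac12$. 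Since $T_n$ was fixed before level $n{+}1$ entered and any level-$n$ automorphism arises as $T_n\circ S^{-1}$ for suitable $S$, nothing in your use of standardness excludes this situation. The paper's function-based bookkeeping sidesteps the issue entirely: it never asks successive isomorphisms to agree pointwise, only that each one approximately fix the already-rigidified functions $f_1,\ldots,f_k$---and that is exactly what the criterion, applied to those functions, delivers.
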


Thus a wider space of ergodic Markov filtrations is foliated over the space of standard filtrations.
The study of a fiber, i.e., of all filtrations finitely isomorphic, e.g., to Bernoulli filtrations, is an interesting and difficult problem.

\begin{proof}
The argument largely resembles the proof of the sufficiency part of the standardness criterion in the homogeneous case, however here we do not construct a normal form of a filtration (in the homogeneous case, a Bernoulli one), but directly prove the isomorphism of standard finitely isomorphic filtrations.

Consider two such filtrations $\{\xi_n\}$ and $\{\xi'_n\}$. Without loss of generality we may assume that both are defined on the same measure space. We will construct bases of the full $\sigma$-algebras of measurable sets rigidly associated with each  filtration. A basis is a sequence $\{\eta_n\}_n$ of finite partitions whose product is the partition
$\epsilon$ into singletons: $\bigvee \eta_n = \epsilon$. Also choose a sequence of positive numbers
$\alpha_n$, $n=1,2,\dots$, that tends to zero. To construct bases, we will use the standardness criterion, starting from some arbitrary total countable system of functions $f_n \in L^{\infty}$, $n=1,2, \dots $. Apply the criterion to the function
 $f_1$,  with $\alpha_1$ as the number $\varepsilon>0$ from its statement. For each filtration there is $n$ (take the greater of these $n$ and call it $n_1$) such that inequality~(4) holds for  the function $f_1$ and elements of the partition $\xi_n$
constituting  a set of measure $>1-\alpha_1$.
 This means that this function is uniquely determined by the structure of partition elements  up to
  $\alpha_1$ (in the $L^1$ norm). But the filtrations are finitely isomorphic, hence there is an isomorphism of the base space that sends the first $n$ partitions of the second filtration to the first $n$ partitions of the first filtration, and thus sends $f_1$ to a function that differs from it by at most
$\alpha_1$. Passing to the quotient space $X/\xi_{{n_1}}$, considering the projection of
$\{f_n\}$, $n>1$, to  $X/\xi_{{n_1}}$, and using the fact that the standardness criterion and finite isomorphism remain valid for quotient filtrations, we repeat the previous argument with necessary modifications ($\alpha_2$ instead of $\alpha_1$, $n_2$ instead of $n_1$, etc.). The new isomorphism of quotient filtrations can be lifted to an isomorphism in the original space that sends a fragment of the second filtration to a fragment of the first filtration up to the partition
 $\xi_{n_2}$. Continuing this process, we obtain a sequence of isomorphisms. The crucial fact is that this sequence converges, which is guaranteed by the totality of the system of functions  $f_n$.\footnote{Note that it is the absence of such a convergence that underlies the existence of non-isomorphic filtrations.} Thus we have constructed an isomorphism of two finitely isomorphic standard filtrations.
 \end{proof}

\begin{Rem}
 1. The  theorem just proved is useful when considering the important class of {\it stationary filtrations and "asymptotically stationary" ones}, which arise as the filtrations of ``pasts'' of stationary random processes, as well as  in the theory of graded graphs. The study of this class is closely related to the isomorphism problem in ergodic theory.

2. The relation in the class of all filtrations determined by the isomorphism (and finite isomorphism) of filtrations is too restrictive, since it breaks down when one changes finite fragments of filtrations. A more acceptable relation is the asymptotic isomorphism, the isomorphism of the ``tails'' of filtrations. In the stationary case, it coincides with the global isomorphism. The standardness is evidently invariant of asymptotic isomorphism. Other asymptotic invariants will be considered in the prepared survey on filtration theory (see the last paragraph of this article).
 \end{Rem}

The modern theory of filtrations started from the lacunary isomorphism theorem for dyadic sequences
\cite{68}. An almost verbatim repetition of its proof yields the following general lacunary isomorphism theorem.

 \begin{Th}
Every ergodic finite type filtration $\{\xi_n\}_{n=1}^{\infty}$ is lacunary isomorphic to a standard filtration, i.e., there exists a sequence of numbers $n_k \to \infty$ such that the filtration $\{\xi_{n_k}\}_{k=1}^{\infty}$ is standard.
 \end{Th}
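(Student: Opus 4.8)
The plan is to prove the lacunary isomorphism theorem by a direct diagonal construction: choose the subsequence $n_k$ so that the standardness criterion is forced to hold for the subfiltration $\{\xi_{n_k}\}_k$, using only the ergodicity of the original filtration together with the martingale-type convergence already exploited in the discussion preceding Theorem~6. First I would fix, as in the proof of Theorem~5, a total countable family $f_1,f_2,\dots$ in $L^\infty(X,\mu)$ and a sequence $\varepsilon_j\downarrow 0$. The key elementary observation is the one already recorded in Section~3: because the filtration is ergodic (zero--one law), Doob's martingale convergence theorem applied to conditional distributions gives, for each fixed $j$, that the conditional distribution of $(f_1,\dots,f_j)$ given $\frak A_n$ converges almost everywhere to the unconditional distribution as $n\to\infty$; equivalently, the Kantorovich distance between the conditional distributions given two independent trajectories $x,y$ tends to zero in measure. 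This is exactly the ``$k$ fixed, $n\to\infty$'' version of the standardness criterion — and the whole point of passing to a subsequence is that thinning the filtration turns this fixed-$k$ statement into the diagonal one.

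The construction is recursive. Having chosen $n_1<\dots<n_{k-1}$, I would use the fixed-$k$ convergence above to pick $n_k>n_{k-1}$ so large that for the first $k$ functions $f_1,\dots,f_k$ the conditional distributions given $\xi_{n_k}$ differ from their limits — equivalently, differ from each other across a pair of elements — by less than $\varepsilon_k$ on a set of pairs of measure greater than $1-\varepsilon_k$, where distances are measured in the $n_{k-1}$-iterated metric (i.e.\ relative to the tree structure that the partitions $\xi_{n_1},\dots,\xi_{n_{k-1}}$ induce on an element of $\xi_{n_k}$). Here the coupling realizing the small Kantorovich distance is, by the Markov property, a special (``Markov'') coupling of trees with measures in the sense required by Definition~5, since the conditional distribution of a Markov chain given a later coordinate is itself a Markov law and the natural coupling respects the filtration's order structure. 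Iterating, the subfiltration $\{\xi_{n_k}\}_k$ satisfies inequality~(4) for every $f_j$ (for all $k$ past the index at which $f_j$ enters), hence, $\{f_j\}$ being total, for every $f\in L^\infty$ by the density argument of Proposition~3; and it is ergodic because $\bigcap_k \xi_{n_k}=\bigcap_n\xi_n$ is trivial. By Theorem~4 the subfiltration is standard.

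The one point requiring care — and the step I expect to be the main obstacle — is the bookkeeping of metrics and couplings under the passage to quotients, i.e.\ making sure that ``$\varepsilon_k$-close in the $n_{k-1}$-iterated metric on elements of $\xi_{n_k}$'' is the right quantity and that it genuinely certifies the criterion for the thinned filtration rather than for the original one. Concretely, an element of the subfiltration's $k$-th partition is an element $C\in\xi_{n_k}$ equipped with the tree structure coming from $\xi_{n_1},\dots,\xi_{n_{k-1}}$, not from $\xi_1,\dots,\xi_{n_k-1}$; one must check that the conditional distribution of $(f_1,\dots,f_k)$ and the Markov coupling, when pushed through the coarser partitions, still control the criterion's integral, and that the errors $\varepsilon_k$ accumulate in a summable way so that totality of $\{f_j\}$ yields the conclusion uniformly. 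This is precisely the ``almost verbatim repetition'' of the proof of the dyadic lacunary isomorphism theorem of~\cite{68} alluded to in the statement: the combinatorial estimates there — that a sufficiently sparse subsequence of a binary filtration looks Bernoulli to any fixed finite partition — transfer to the finite-type setting once the Kantorovich/Markov-coupling formalism of Section~2 is in place, and no genuinely new idea beyond the careful quotient bookkeeping is needed.
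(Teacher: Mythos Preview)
The paper does not actually prove this theorem: it merely asserts that ``an almost verbatim repetition'' of the proof of the dyadic lacunary isomorphism theorem from~\cite{68} suffices, and then states the result. Your proposal is therefore not competing against a proof in the text but rather reconstructing one, and your reconstruction is faithful to the intended strategy: recursively choose $n_k$ so that, for each function in a fixed total family, the $(k{-}1)$-st iterated semimetric on $X/\xi_{n_{k-1}}$ is pushed, via the Kantorovich transfer along $\xi_{n_k}/\xi_{n_{k-1}}$, to a semimetric that is small on most pairs --- this being possible by ergodicity (martingale convergence of conditional measures) since $\rho_{k-1}$ is a \emph{fixed} semimetric once $n_1,\dots,n_{k-1}$ are chosen.

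One clarification: your sentence ``the coupling realizing the small Kantorovich distance is, by the Markov property, a special (`Markov') coupling'' is slightly misplaced. At the single step from $\xi_{n_{k-1}}$ to $\xi_{n_k}$ you are computing an \emph{ordinary} Kantorovich distance between conditional measures on the quotient $X/\xi_{n_{k-1}}$; no tree constraint is imposed on that coupling, and none is needed. The tree structure of the earlier levels is already encoded in the iterated semimetric $\rho_{k-1}$ itself --- this is exactly the content of the paper's Theorem~4 (the equivalence of condition~(3) and the criterion~(4)). So the ``special coupling'' requirement is handled automatically by the recursion on metrics, and your concern about ``bookkeeping of metrics and couplings under the passage to quotients'' largely dissolves: there is no error accumulation, because at step~$k$ you directly force $\rho_k$ itself (not an approximation to it) to be small. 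Similarly, summability of the $\varepsilon_k$ is not needed; $\varepsilon_k\to 0$ suffices.
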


Remark that the filtration in the theorem could be not finitely isomorphic to any standard filtration.

 There exists unique up to {\it finite isomorphism}
filtration of the continuous type: the conditional measures almost all elements of the partition $\xi_n/\xi_{n-1}, n=1 \dots$ are continuous. The standard filtration at this class is Bernoulli filtration with continuous state space. The invariants with respect to isomorphism are important for the theory of stochastic processes.

\subsection{Examples}

1. A standard homogeneous filtration is Bernoulli. The corresponding Bratteli diagram is the diagram of an infinite tensor product. It is not difficult to give an explicit formula for standard semihomogeneous filtrations using graded graphs.

2. An interesting and simple example of finitely isomorphic but nonisomorphic  ergodic Markov stationary filtrations is as follows:
$$X=\prod_1^{\infty} \{0;1\},$$
with the transition matrices
$$\left(
  \begin{array}{cc}
    p & q \\
    p & q \\
  \end{array}
\right)\qquad\text{and}\qquad\left(
  \begin{array}{cc}
    p & q \\
    q & p \\
  \end{array}
\right),$$
where $p \ne q$. The first filtration is Bernoulli, and hence standard. The second one is not standard, which can easily be seen by checking the violation of the criterion condition: namely, the distance between two distinct vertices is equal to
 $|p-q|$ at all levels (see also below). If a stationary Markov chain with finitely many states generates a homogeneous filtration, then the standardness holds.

3. A typical example of a nonstandard Markov homogeneous filtration is as follows. Consider a Bernoulli action of an infinite countable group and the Markov process of walking over trajectories of this action determined by a measure on the group. The first example of a nonstandard dyadic filtration involved a simple walk over trajectories of a Bernoulli action of the free group with two generators (see \cite{70}).\footnote{In 1968, when the author started  working on these problems, he believed that in the dyadic case all ergodic filtrations are standard and mentioned this in a short remark in the paper on lacunary isomorphism \cite{68}. The same error was also made, still earlier, by N.~Wiener in the book \cite{W},  chapter ``Decoding.''}
For the group $\Bbb Z$, we obtain Kalikow's Markov shift, or the $(T,T^{-1})$ transformation; the nonstandardness of the past of this process was conjectured by the author and proved in \cite{Ru}. For the multidimensional lattice  ${\Bbb Z}^d$, this is proved in \cite{VG}. The general situation has not been studied.

4. There are fragmentary results on the standardness of the tail filtration for central measures on some Bratteli diagrams (the Pascal graph \cite{De}, the multidimensional Pascal graph, the Young graph, etc., see \cite{14}).

5. The problem of standardness or non-standardness of a stationary Markov chain (and the corresponding filtration) in the case of an infinite state set is an important open question. For random walks over trajectories of measure-preserving group actions, the answer depends on the properties of the action: for actions with discrete spectrum the standardness holds, Bernoulli actions are considered above. For general random walks in a random environment no general results are known.

\section{Nonstandardness: shadow metric-measure spaces}

We will restrict ourselves only to the main definition of a new notion,  which will be considered in a separate paper.

Assume that an ergodic filtration is not standard and the limit of the integrals in the left-hand side of (3), which, as one can easily see, always exists, does not vanish. Does this mean that the sequence of semi-metrics converges to some semi-metric?
It turns out that the sequence of semi-metrics $\rho_n$ cannot converge on a set of pairs of points of positive measure
$\mu\times \mu$, since the converse would contradict the ergodicity of the filtration. However, in some examples at least, there exists a limit of the matrix distributions of the metrics, i.e., a limit of the measures on the space of distance matrices corresponding to taking random independent samples of points of the metric space.

First recall what is the matrix distribution of a Gromov--Vershik metric triple $(X,\mu,\rho)$  (see \cite{Gr,U,2002}). Denote by ${\Bbb M}_{\Bbb N}^{dist}$ the space of distance matrices, i.e., the cone of infinite real symmetric matrices with zeros on the main diagonal and entries satisfying the triangle inequality. Consider the map
$$F_{\rho}:\{x_i\}_{i=1}^{\infty}\mapsto \{\rho(x_i,x_j)\}$$
from the infinite product $X^{\infty}$ equipped with the Bernoulli measure $\mu^{\infty}$ to the space of distance matrices. The image of the Bernoulli measure under this map is called the matrix distribution of the triple
$(X,\mu,\rho)$.  If the measure $\mu$ is not degenerate (i.e., is positive on nonempty open sets), then, by a theorem from \cite{U,Gr},
the matrix distribution is a complete invariant of the metric triple with respect to measure-preserving isometries. One can describe exactly what measures can appear as such images, i.e., what measures can be matrix distributions of metric triples, see \cite{2002}.

However, the set of matrix distributions is not weakly closed in the space of measures on the cone ${\Bbb M}_{\Bbb N}^{dist}$. Hence we can define limiting objects for which analogs of the matrix distribution are measures lying in the closure of the set of matrix distributions of metrics.

\begin{Def}
A space with a continuous measure $(X,\mu)$ together with a measure $D$
on the cone ${\Bbb M}_{\Bbb N}^{dist}$ lying in the weak closure of the set of matrix distributions
is called a shadow metric triple if there exists a sequence of admissible semi-metrics $\rho_n$ on $(X,\mu)$ such that
 $$w\text{-}\lim_n D_{\rho_n}=D,$$
where $D_{\rho_n}$ is the matrix distribution of the semi-metric $\rho_n$.
 \end{Def}

Note that the measure $D$ is invariant under the action of the infinite symmetric group $ S_{\Bbb N}$  by simultaneous permutations of rows and columns of matrices. Thus an ordinary metric triple is also a shadow one.

Speaking somewhat roughly, we define a map
$$(X^{\infty},\mu^{\infty}) \rightarrow ({\Bbb M}_{\Bbb N}^{dist}, D_{X,\mu})
$$
as the limit of the maps $F_{\rho_n}$. In other words, although the limit of the metrics
$\rho_n$ may not exist, yet the limit distribution of the collection of distances does exist.

For instance, it follows from this definition that for every pair of points of a shadow metric triple there exists a distribution on the half-line
${\Bbb R}_+$ that plays the role of a random distance and does not depend on this pair. In Example~2, say, this is the distribution  $(1/2,1/2)$ on the distances $0$ and $|p-q|$. It is the limiting (more exactly, empirical) distribution for the sequence of distances between two points. In the ordinary sense of the word, a metric and points are not defined  in a shadow mm-space. However, in such a space there is, for instance, a well-defined notion of the  $\epsilon$-entropy (the so-called ``secondary entropy''), and other invariants of metric spaces are defined.

It is not yet known whether such limiting distributions exist for any nonstandard filtration and whether such a filtration always gives rise to a shadow metric triple. Apparently, these new distributions must play a significant role in the theory of random processes and their applications. In this connection, we would like to mention, in particular, that in the stationary case the notion of standardness is similar to, but does not coincide with, D.~Ornstein's VWB condition
(\cite{O}), and nonstandardness arise in examples of non-Bernoulli systems with completely positive entropy.

Observe an outward similarity between the notions of shadow metric triples and ``pointless spaces'' in von Neumann's continuous geometries. We will not dwell on the existing relations between non-standardness and the theory of von Neumann factors or the theory of ergodic equivalence relations, as well as on the links between the notion of standardness and the theory of adic transformations and the representation theory of AF algebras.

\end{document}